\tikzset{every state/.style={minimum size=0pt}}
\newtheorem{theorem}{Theorem}
\newtheorem{lemma}{Lemma}
\newtheorem{conjecture}{Conjecture}
\newtheorem{observation}{Observation}
\newtheorem*{remark}{Remark}
\begin{document}
\title{\textbf{Towards a conjecture on long induced rainbow paths in  $\Delta$-free graphs}}
\author{N.R. Aravind, Shiwali Gupta, and Rogers Mathew}
\affil{Department of Computer Science and Engineering, Indian Institute of Technology Hyderabad. 
\authorcr\{aravind, cs21resch11002, rogers\}@iith.ac.in}
\date{}
\maketitle
\textbf{Keywords:}{ triangle-free graphs, induced path, rainbow path, colorful path, chromatic number, proper coloring, Gallai--Roy theorem, Gallai--Vitaver--Roy Theorem.}

\begin{abstract}
Given a triangle-free graph $G$ with chromatic number $k$ and a proper vertex coloring $\phi$ of $G$, it is conjectured that $G$ contains an induced rainbow path on $k$ vertices under $\phi$. Scott and Seymour proved the existence of an induced rainbow path on $(\log \log \log k)^{\frac{1}{3}- o(1)}$ vertices. We improve this to $(\log k)^{\frac{1}{2}- o(1)}$ vertices. Further, we prove the existence of an induced path that sees $\frac{k}{2}$ colors.
\end{abstract}

\section{Introduction}
We shall use $[n]$ to denote $\{1, 2, \dots, n\}$, where $n$ is a positive integer. All logarithms are to the base $2$ in this paper.
We consider finite and simple graphs throughout the paper. 
Given a graph $G$, we use $V(G)$ to denote its vertex set and $E(G)$ to denote its edge set. For a vertex $v \in V(G)$, we shall use $N_G(v)$ to denote its neighborhood in $G$. 
A \emph{proper vertex coloring} of $G$ using $k$ colors (or a $k$-coloring of $G$) is a function $\phi ~:~ V(G) \longrightarrow [k]$ such that for every edge $uv \in E(G)$, $\phi(u) \neq \phi(v)$. 
The minimum positive integer $k$ for which there exists a $k$-coloring of $G$ is called the chromatic number of $G$, denoted by $\chi(G)$. 
We shall use the term \emph{colored graph} to denote a tuple $(G, \phi)$, where $G$ is the graph under consideration and $\phi$ is a proper coloring of $G$.

Given a colored graph $(G, \phi)$,
a path $P$ in $G$ is called \emph{rainbow} if all the vertices of $P$ have different colors under $\phi$.
A path $P$ is called \emph{$k$-colorful} if the set of colors appearing on the vertices of $P$ has size at least $k$. Equivalently, $P$ contains vertices of $k$ distinct colors, but the number of vertices in $P$ can be much larger as compared to $k$.
A path $P$ in $G$ is called \emph{induced} if no two nonconsecutive vertices of $P$ are adjacent in $G$. The \emph{order} of a path is the number of vertices in it.

Given a triangle-free graph $G$, Gy\'arf\'as \cite{gyarfas1987problems} proved (by induction on $\chi(G)$) that $G$ contains an induced path on $\chi(G)$ vertices. 
The Gallai--Roy Theorem \cite{gallai1968directed, roy1967nombre} implies that every oriented graph contains a directed path of order equal to its chromatic number: by orienting each edge toward the endpoint with the larger color, every directed path becomes rainbow, and hence a rainbow path of order equal to its chromatic number exists. 
To summarize, given a colored triangle-free graph $(G, \phi)$, (i) $G$ contains an induced path of order $\chi(G)$ , and (ii) $G$ contains a rainbow path of order $\chi(G)$. The following conjecture on induced rainbow paths was posed by the first author (see \cite{babu2019induced}) of this paper.

\begin{conjecture} \cite{babu2019induced}
Every colored triangle-free graph $(G, \phi)$ contains an induced rainbow path on $\chi(G)$ vertices.
\end{conjecture}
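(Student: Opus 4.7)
The plan is to attempt an extremal attack, in the spirit of Gy\'arf\'as's proof for induced paths. I would fix a triangle-free colored graph $(G,\phi)$ with $\chi(G)=k$ and choose an induced rainbow path $P = v_1 v_2 \cdots v_\ell$ of maximum order. Supposing for contradiction that $\ell < k$, some color class $C^*$ has a color absent from $P$, and the aim is to produce a longer induced rainbow path by extending or rotating $P$ at one of its endpoints.

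The first structural step is to classify each vertex $u \in V(G)\setminus V(P)$ adjacent to $v_\ell$ into two types: (a) $\phi(u) \in \phi(V(P))$, or (b) $u$ has a neighbor $v_i$ on $P$ with $i \le \ell - 2$ (triangle-freeness rules out $i = \ell - 1$ and $i = \ell$). If some $u \in C^*$ is of type (b), I would replace the suffix $v_{i+1}\cdots v_\ell$ by $v_i u$, obtaining a shorter induced rainbow path ending in the new color $\phi(u)$, and then attempt to extend this path further by a symmetric analysis at the opposite endpoint or by iterating the rotation. In parallel I would try an inductive route, deleting a well-chosen neighborhood that reduces $\chi$ by one and gluing the resulting induced rainbow path back via an appropriately colored vertex, and also exploring a Gallai--Roy orientation (edges directed toward the larger color) in which every directed path is automatically rainbow, and then using triangle-freeness to argue that some long directed path is also induced.

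The main obstacle --- and the reason the conjecture has resisted proof despite the significant progress from $(\log\log\log k)^{1/3-o(1)}$ (Scott--Seymour) to $(\log k)^{1/2-o(1)}$ in the present paper --- is that triangle-free graphs of chromatic number $k$ admit extraordinarily tangled long-range structure (Mycielski, shift, and Kneser constructions), so local rotations transport information only a few steps along $P$ before control is lost, and the attachment of $C^*$ to $P$ can be arranged so that every candidate extension creates a chord. A complete proof almost certainly requires marrying an extremal skeleton of this sort to a global $\chi$-boundedness argument that tracks color usage, or an iterative scheme in which short induced rainbow paths can be concatenated without the (sub)logarithmic loss incurred by naive attachment analyses; devising such a combination is, in my view, the heart of the difficulty.
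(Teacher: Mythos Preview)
The statement you are addressing is stated in the paper as a \emph{conjecture}, not as a theorem: the paper does not prove it, and indeed the paper's contributions are the weaker results (the $(\log \chi(G))^{1/2-o(1)}$ induced rainbow path and the induced $\chi(G)/2$-colorful path). There is therefore no ``paper's own proof'' to compare your proposal against.

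Your write-up is not a proof either, and you are candid about this: it is a research plan that lists several natural lines of attack (maximal induced rainbow path plus rotation, Gy\'arf\'as-style induction after deleting a neighborhood, Gallai--Roy orientation) and then explains why each stalls. That assessment is accurate. Concretely, the rotation step you describe replaces the suffix $v_{i+1}\cdots v_\ell$ by the single edge $v_i u$, which, as you note, \emph{shortens} the path; to recover length you would need to keep extending, and nothing forces the new endpoint to admit a neighbor of a fresh color that avoids all chords back to the surviving prefix. The Gallai--Roy route gives rainbow directed paths, but triangle-freeness alone does not make a long directed path induced, and this is exactly where the paper's own partial result has to work hard (via gradings, bounding out-degrees along the path, and a BFS depth argument) just to reach $(\log \chi(G))^{1/2-o(1)}$. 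So what you have is a reasonable survey of why the problem is hard, not a proof; since the paper offers none either, there is no discrepancy to report.
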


While this conjecture remains open in general, several partial results are known. Babu et. al. \cite{babu2019induced} studied a weaker version of the conjecture and proved that it holds when the girth of the graph equals its chromatic number. The \emph{girth} of a graph is the length of its shortest cycle; if no cycle exists, the girth is infinite. 
Subsequently, Gy\'arf\'as and S\'ark\"ozy \cite{gyarfas2016induced} investigated the conjecture on colored graphs $(G, \phi)$ having girth at least five. They proved that $(G,\phi)$ contains an induced rainbow path on $(\log \chi(G))^{1 - o(1)}$ vertices. Very recently, this result on graphs of girth at least 5 was improved by Basavaraju et al. \cite{basavaraju2025variants} who showed that  $(G, \phi)$ contains an induced rainbow path on $\Omega(\sqrt{\chi(G)})$ vertices. The proofs of both the aforementioned results crucially use the fact $G$ is $C_4$-free and hence cannot be extended to triangle-free graphs. The conjecture in its full generality, without any girth restriction, was addressed by Scott and Seymour \cite{DBLP:journals/combinatorics/ScottS17}. They showed that every colored triangle-free graph $(G, \phi)$ contains an induced rainbow path on $(\log\log\log \chi(G))^{\frac{1}{3}-o(1)}$ vertices.

In this work, we improve the bound given by Scott and Seymour in \cite{DBLP:journals/combinatorics/ScottS17}. We show that every colored triangle-free graph $(G, \phi)$ contains an induced rainbow path on $(\log \chi(G))^{\frac{1}{2}- o(1)}$ vertices.
Further, if we relax the requirement from a rainbow path to a colorful path, then we show that a colored triangle-free graph $(G, \phi)$ contains an induced $\frac{\chi(G)}{2}$-colorful path.

\section{Our results}
We begin this section by defining a few definitions and notations that will be used in the proof. The following definition is from \cite{DBLP:journals/combinatorics/ScottS17}.
A \emph{grading} of a graph $G$ is a sequence $(W_1, W_2, \dots, W_n)$ of pairwise disjoint subsets of $V(G)$ such that $W_1 \uplus W_2 \uplus \dots \uplus W_n = V(G)$. If $\chi(G[W_i]) \le k$ for every $i \in [n]$, then $(W_1, W_2, \dots, W_n)$ is called a \emph{$k$-colorable grading}. Given a grading $(W_1, W_2, \dots, W_n)$ and vertices $u,v \in V(G)$, we say that $u$ is \emph{later than} $v$ if $u \in W_i$, $v \in W_j$, and $i>j$.

Given an undirected graph $G$ (resp., oriented graph $\overrightarrow{G}$), and a subset $S  \subseteq V(G)$ (resp., $V(\overrightarrow{G}))$, we use $G[S]$ (resp., $\overrightarrow{G}[S]$) to denote the subgraph of $G$ (resp., $\overrightarrow{G}$) induced by $S$. 
We shall use $P= v_1$ - $v_2$ - $\cdots$ - $v_k$ to denote a path of order $k$, and $\overrightarrow{P} = v_1 \xrightarrow{} v_2 \xrightarrow{} \cdots \xrightarrow{} v_k$ to denote a directed path of order $k$.

Let $P= v_1$ - $v_2$ - $\cdots$ - $v_k$ be a $k$-vertex path. For any $1 \le i \le k$, we shall use $P_{v_i}$ to denote the path $v_1$ - $v_2$ - $\cdots$ - $v_i$. 
Let $P'= v_1$ - $v_2$ - $\cdots$ - $v_\ell$ and $P'' = w_1$ - $w_2$ - $\cdots$ - $w_r$ be two paths in a graph $G$. Suppose $v_{\ell}w_1 \in E(G)$. Then, we use $P'$ - $P''$ to denote the path $v_1$ - $v_2$ - $\cdots$ - $v_\ell$ - $w_1$ - $w_2$ - $\cdots$ - $w_r$.

The lemma stated below is a reformulation of Statement~2.2 in \cite{DBLP:journals/combinatorics/ScottS17}. 
The main difference lies in the choice of the parameter $r$. 
In \cite{DBLP:journals/combinatorics/ScottS17}, the value of $r$ is derived from the theorem of Galvin, Rival, and Sands \cite{galvin1982ramsey}, where $r = 2^{2^{2^{s^{3}}}}$. 
In contrast, we use $r = 4 \cdot 2^{2(s-1) \log (s-1)}$, which is significantly smaller. 
This improvement leads to a better bound on the order of a longest induced rainbow path in a triangle-free graph.

\begin{lemma}
\label{lem k_{1,s}}
Let $s \ge 3$ be an integer and let $r=4 \cdot 2^{2(s-1)\log (s-1)}$. Let $(G, \phi)$ be a colored graph with $\chi(G) \ge k\cdot r$, where $k$ is some positive integer. Let $(W_1, W_2, \dots, W_n)$ be a $k$-colorable grading of $G$. Then, one of the following statements always holds:\\
(i) $G$ contains an induced rainbow path on $s$ vertices under $\phi$.
(ii) $\exists i \in [n]$ and a vertex $v \in W_i$, and a set of $s$ vertices, pairwise with different colors under $\phi$, all later than $v$, and all adjacent to $v$.
\end{lemma}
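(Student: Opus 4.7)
The plan is to assume both conclusions of the lemma fail and derive a contradiction by exhibiting a proper coloring of $G$ with fewer than $kr$ colors. The failure of (ii) gives the key structural hypothesis: for every vertex $v$, the set of $\phi$-colors appearing on the later neighbors of $v$ has size at most $s-1$.

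\textbf{Step 1: Reduction to the $1$-colorable grading case.} For each $i$, fix a proper $k$-coloring $\eta_i$ of $G[W_i]$, giving a global function $\eta\colon V(G)\to[k]$. For every $c\in[k]$, the induced subgraph $G_c := G[\eta^{-1}(c)]$ carries a $1$-colorable grading, since each $W_i\cap\eta^{-1}(c)$ is independent in $G$, and being an induced subgraph it inherits the failures of (i) and (ii). If the lemma is established in the $k=1$ case, then each $\chi(G_c)<r$, and the product coloring $v\mapsto(\eta(v),\beta_{\eta(v)}(v))$, where $\beta_c$ is a proper $r$-coloring of $G_c$, is a proper $(kr)$-coloring of $G$, contradicting $\chi(G)\ge kr$.

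\textbf{Step 2: The $k=1$ case via greedy path extension.} Now each $W_i$ is independent, so the grading orients $G$ as a DAG. I plan to build an induced rainbow path $v_1$ - $v_2$ - $\cdots$ - $v_s$ with each $v_{i+1}$ later than $v_i$, iteratively choosing $v_{i+1}$ from the later neighbors of $v_i$, while maintaining a residual set $U_i$ of still-viable extensions (vertices later than $v_i$, non-adjacent to all of $v_1,\ldots,v_{i-1}$, and avoiding the colors $\phi(v_1),\ldots,\phi(v_i)$) whose induced chromatic number stays large. The transition $U_i\to U_{i+1}$ requires restricting to vertices later than $v_{i+1}$, deleting the later neighborhood of $v_i$ (which by the failure of (ii) lies in at most $s-1$ $\phi$-color classes, each an independent set, so $\chi$ drops by at most $s-1$), and deleting the $\phi$-color class of $v_{i+1}$. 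The aim is to select $v_{i+1}$ via iterated pigeonhole so that the three operations together give $\chi(G[U_{i+1}])\ge \chi(G[U_i])/(s-1)^2$. Iterating $s-1$ times yields a multiplicative loss of $(s-1)^{2(s-1)}=2^{2(s-1)\log(s-1)}$; the constant factor $4$ absorbs boundary terms and guarantees $U_{s-1}$ is nonempty, supplying the final vertex $v_s$ and hence an induced rainbow $P_s$, contradicting the failure of (i).

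\textbf{Main obstacle.} The delicate point is the grade-restriction in the transition, for which I need the loss per step to be $(s-1)^2$ and not much larger; this is precisely the place where the Galvin--Rival--Sands invocation from \cite{DBLP:journals/combinatorics/ScottS17} is replaced by a sharper direct argument. I expect the tight bound to come from a two-stage pigeonhole: first choose the grade of $v_{i+1}$ so that the part of $U_i$ strictly later than that grade still carries a $(s-1)^{-1}$ fraction of $\chi(G[U_i])$, then exploit the $(s-1)$-bound on later color-neighborhoods to argue that the color-class deletions and the choice of a specific $v_{i+1}$ inside the grade cost at most another factor of $s-1$.
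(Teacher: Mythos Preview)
Your Step~1 is correct and coincides with the paper's opening move: refine the $k$-colorable grading into a $1$-colorable one and pass to a part $Z_j$ with $\chi(G[Z_j])\ge r$.

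Step~2, however, has a genuine gap at exactly the point you flag as the ``main obstacle.'' In the transition $U_i\to U_{i+1}$ you must choose $v_{i+1}$ among the \emph{later neighbours} of $v_i$, and the failure of (ii) controls only the number of $\phi$-\emph{colours} on those neighbours, not their \emph{grades}. Your proposed first-stage pigeonhole (``choose the grade of $v_{i+1}$ so that the part of $U_i$ strictly later than it keeps a $(s-1)^{-1}$ fraction of the chromatic number'') presumes you are free to pick that grade, but you are not: every admissible grade must contain a later neighbour of $v_i$, and nothing prevents all such neighbours from lying in very late grades while the chromatic mass of $U_i$ lives in early ones. In that situation every legal choice of $v_{i+1}$ collapses $U_{i+1}$ to something with tiny chromatic number, and no partition of the $\le s-1$ colour classes of later neighbours rescues you, since each class may occupy arbitrarily many grades. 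So the per-step loss cannot be bounded by $(s-1)^2$ via this route.

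The paper avoids the grade-restriction issue entirely by a different mechanism. After the same reduction to a $1$-colorable grading, it orients $G_j$ from smaller to larger $\phi$-colour (so every directed path is rainbow), then splits the arcs into those going \emph{forward} versus \emph{backward} in the grading order. One of the two spanning subgraphs $\overrightarrow{G_j^1},\overrightarrow{G_j^2}$ has $\chi\ge\sqrt{r}$, hence by Gallai--Roy a directed rainbow path $P$ on $\ge\sqrt{r}$ vertices whose grade sequence is monotone. Now the failure of (ii) is invoked once, globally: inside $V(P)$ every vertex has at most $s-1$ out-neighbours (resp.\ in-neighbours), because those neighbours are simultaneously later in the grading and pairwise differently coloured (they sit on a rainbow path). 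A BFS on the $\ge\sqrt{r}$ vertices of $P$ with branching $\le s-1$ therefore has depth $\ge s-1$, and a root-to-leaf shortest path is induced (acyclicity rules out chords) and rainbow, giving (i). There is no iterative chromatic bookkeeping, and the factor $\sqrt{r}=2(s-1)^{s-1}$ is what produces the value of $r$.
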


\begin{proof}
Since $(W_1, W_2, \dots, W_n)$ is a $k$-colorable grading of $G$, each $W_i$, $i \in [n]$, admits a proper coloring with $k$ colors, and hence can be partitioned into $k$ parts (some of these parts could be empty sets), where each part is an independent set in $G$.
Let $W_{i_1}, W_{i_2}, \dots, W_{i_k}$ denote the $k$ parts of $W_i$, $i \in [n]$. 
We now obtain a partition of $V(G)$ into $k$ parts, denoted by $(Z_1, Z_2, \dots, Z_k)$, where each $Z_j := W_{1_j} \uplus W_{2_j} \uplus \dots \uplus W_{n_j}$. 
Since  $\chi(G) \ge k\cdot r$, there exists some $Z_j$, $1 \le j \le k$, with $\chi(G[Z_j]) \ge r$. 
Let $G_j$ denote $G[Z_j]$.
We now orient each edge of $G_j$ from its lower-colored vertex to its higher-colored vertex.
The resulting oriented graph is denoted by $\overrightarrow{G_j}$.
The observations below follow from the definition of $\overrightarrow{G_j}$.

\begin{observation}
\label{obs acyclic}
$\overrightarrow{G_j}$ is a directed acyclic graph.
\end{observation}

\begin{observation}
\label{obs rainbow}
Every directed path in $\overrightarrow{G_j}$ is a rainbow path.
\end{observation}
     
We define an ordering $\pi$ on $V(\overrightarrow{G_j}) = W_{1_j} \uplus W_{2_j} \uplus \dots \uplus W_{n_j}$ such that all the vertices of $W_{1_j}$ appear first from left to right (with ties broken arbitrarily), followed by all the vertices of $W_{2_j}$, and so on.
We split the oriented graph $\overrightarrow{G_j}$ into two subgraphs, $\overrightarrow{G_j^1}$ and $\overrightarrow{G_j^2}$, each defined on the full vertex set $V(\overrightarrow{G_j})$. 
The edge set of a subgraph $\overrightarrow{G_j^1}$ is the set of all the forward edges (edges that are going from left to right) of $\overrightarrow{G_j}$ and the edge set of a subgraph $\overrightarrow{G_j^2}$ is the set of all the backward edges (edges that are going from right to left) of $\overrightarrow{G_j}$ with respect to $\pi$. From Observations \ref{obs acyclic} and \ref{obs rainbow}, we get the following.

\begin{observation}
\label{obs path_in_G}
A directed induced path in $\overrightarrow{G_j^i}$, $i \in \{1, 2\}$, is a directed induced rainbow path in $\overrightarrow{G_j}$. Further, it is an induced rainbow path in $G_j$ as well as in $G$. 
\end{observation}

By the Gallai--Vitaver--Roy Theorem (\cite{lovasz2007combinatorial}, Ex. 9.9), we have $ r \le \chi(\overrightarrow{G_j}) = \chi(\overrightarrow{G_j^1} \cup \overrightarrow{G_j^2}) \le \chi(\overrightarrow{G_j^1}) \cdot \chi(\overrightarrow{G_j^2})$. 
Thus, $\chi(\overrightarrow{G_j^i}) \ge \sqrt{r}$ for some $i \in \{1, 2\}$.
Let $\overrightarrow{P^1} := x_1 \xrightarrow{} x_2 \xrightarrow{} \cdots \xrightarrow{} x_{\ell_1}$ and $\overrightarrow{P^2} := y_1 \xrightarrow{} y_2 \xrightarrow{} \cdots \xrightarrow{} y_{\ell_2}$  be longest directed paths in $\overrightarrow{G_j^1}$ and $\overrightarrow{G_j^2}$, respectively.
By the Gallai--Roy Theorem \cite{gallai1968directed, roy1967nombre} and Observation \ref{obs rainbow}, we have 

\begin{observation}
\label{obs path_in_p1_or_in_p2}
Both $\overrightarrow{P^1}$ and $\overrightarrow{P^2}$ are rainbow paths with $\ell_1 \ge \sqrt{r}$ or $\ell_2 \ge \sqrt{r}$.
\end{observation}

We make the following observation based on our construction of $\overrightarrow{G_j^1}$ and $\overrightarrow{G_j^2}$ from $\overrightarrow{G_j}$.

\begin{observation}
\label{obs later}
(i) Let $1 \le a < b \le \ell_1$. Then, $x_b$ is later than $x_a$ with respect to the grading $(W_1, W_2, \dots, W_n)$.\\
(ii) Let $1 \le a < b \le \ell_2$. Then, $y_a$ is later than $y_b$ with respect to the grading $(W_1, W_2, \dots, W_n)$.
\end{observation}

If Statement (ii) of the lemma holds, then our proof is complete. Suppose Statement (ii) does not hold. Then from Observations \ref{obs path_in_p1_or_in_p2} and \ref{obs later}, we have the following. In what follows, let $X := V(\overrightarrow{P^1})= \{x_1, x_2, \dots, x_{\ell_1}\}$ and $Y := V(\overrightarrow{P^2})= \{y_1, y_2, \dots, y_{\ell_2}\}$.

\begin{observation}
\label{obs s-1 neighbors}
(i) Every vertex $x_i$ in $\overrightarrow{P^1}$ has at most $s-1$ out-neighbors in $\overrightarrow{G_j^1}[X]$.\\
(ii) Every vertex $y_i$ in $\overrightarrow{P^2}$ has at most $s-1$ in-neighbors in $\overrightarrow{G_j^2}[Y]$.
\end{observation}

Suppose $\ell_1 \ge \sqrt{r}$. Consider the oriented graph $\overrightarrow{G_j^1}[X]$. Apply a Breadth-First Search (BFS) on $\overrightarrow{G_j^1}[X]$ by taking $x_1$ as the start vertex and by using only out-edges for the traversal.
The BFS tree thus obtained is rooted at $x_1$ and every vertex $x_i$ in the tree has at most $s-1$ child nodes due to Observation \ref{obs s-1 neighbors}(i). 
If $d$ denotes the depth of this tree on $\sqrt{r}$  vertices, then $\sqrt{r} \le (s-1)^0 + (s-1)^1 + \dots + (s-1)^d \le 2(s-1)^d$ (since $s \ge 3$). 
Thus, $d \ge \frac{\log \big(\frac{\sqrt{r}}{2}\big)}{\log (s-1)} = \frac{\log \big(2^{(s-1) \log (s-1)}\big) }{\log (s-1)} = s-1$.
Thus, the depth of the BFS tree is at least $s-1$ and therefore $\overrightarrow{G_j^1}[X]$ has an directed induced path on $s$ vertices. It is easy to see that such a path is also an directed induced path in $\overrightarrow{G_j^1}$. Using Observation \ref{obs path_in_G}, we can conclude that such a path is an induced rainbow path in $G$.

Suppose $\ell_2 \ge \sqrt{r}$. Consider the oriented graph $\overrightarrow{G_j^2}[Y]$. Apply a BFS on $\overrightarrow{G_j^2}[Y]$ by taking $y_{\ell_2}$ as the start vertex and by using only in-edges for the traversal. We omit the rest of the proof as the arguments are analogous to the ones used in the case $\ell_1 \ge \sqrt{r}$ explained in the above paragraph.
\end{proof}

Let  $r$ be as defined in the statement of Lemma \ref{lem k_{1,s}}. Let $w_s =0$, and for $j= s-1, \dots, 1$, let $w_j = w_{j +1}  r + 1$. Let $c$ be defined as 
\begin{equation}
\label{eq c_value}
c = (w_1 +1)r    
\end{equation}
The lemma stated below is a reformulation of Statement 2.3 in \cite{DBLP:journals/combinatorics/ScottS17}.

\begin{lemma} \cite{DBLP:journals/combinatorics/ScottS17}
\label{lem seymour main} 
Let $s$ be any positive integer. Let $r=4 \cdot 2^{2(s-1)\log (s-1)}$ as defined in Lemma \ref{lem k_{1,s}}. Let $c$ be as defined in Equation \ref{eq c_value}. Then, every colored triangle-free graph $(G, \phi)$ with $\chi(G) > c$ contains some induced rainbow path on $s$ vertices.
\end{lemma}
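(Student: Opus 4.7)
The plan is to prove Lemma~\ref{lem seymour main} by iterating Lemma~\ref{lem k_{1,s}} a total of $s-1$ times, following the iterative scheme of Scott and Seymour. The recursion $w_j = w_{j+1} r + 1$ with $w_s = 0$ is calibrated so that each application of Lemma~\ref{lem k_{1,s}} consumes a factor of roughly $r$ in chromatic number; computing, $(w_{j+1} + 1) r = w_{j+1} r + r = (w_j - 1) + r = w_j + r - 1$, so the threshold drops from $(w_{j+1}+1)r \approx w_j$ at stage $j$ to $(w_{j+2}+1)r \approx w_{j+1} \approx w_j / r$ at stage $j+1$.

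I will maintain at each stage $j \in \{0, 1, \ldots, s-1\}$ an invariant consisting of a partial induced rainbow path $P_j = v_1 v_2 \cdots v_j$ (with $P_0$ empty), together with a subgraph $H_j \subseteq G$ and a $w_{j+1}$-colorable grading of $H_j$, such that $V(H_j)$ is disjoint from $V(P_j)$, contains no vertex colored $\phi(v_i)$ for any $i \le j$, contains no neighbor of $v_i$ for $i < j$, and satisfies $\chi(H_j) > (w_{j+1}+1) r$. For the base case $j = 0$, take $H_0 = G$ with the grading obtained by partitioning a proper $\chi(G)$-coloring of $G$ into blocks of $w_1$ consecutive color classes; the chromatic bound $\chi(G) > c = (w_1+1) r$ ensures the invariant.

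At iteration $j$, I apply Lemma~\ref{lem k_{1,s}} to $H_j$ with $k = w_{j+1}$. If Case~(i) triggers, the induced rainbow path on $s$ vertices found inside $H_j$ is already an induced rainbow path in $G$ by the invariants on $V(H_j)$, and we are done. If Case~(ii) triggers, we obtain a vertex $v_{j+1} \in H_j$ with $s$ distinctly-colored later neighbors in $H_j$; triangle-freeness then forces these $s$ neighbors to form an independent set, and we use this rigidity to extend $P_j$ to $P_{j+1} = P_j\,v_{j+1}$ and to define $H_{j+1}$ as a carefully chosen subgraph of the later-than-$v_{j+1}$ vertices in $H_j$, refined to a $w_{j+2}$-colorable grading (by subdividing each $w_{j+1}$-chromatic piece into approximately $r$ sub-pieces), while ensuring $\chi(H_{j+1}) > (w_{j+2}+1) r$. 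After $s-1$ iterations, $\chi(H_{s-1}) > (w_s+1)r = r > 0$, so any vertex of $H_{s-1}$ can be taken as $v_s$ to complete an induced rainbow $P_s$.

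The main obstacle is verifying that $\chi(H_{j+1}) > (w_{j+2}+1) r$ after the reduction, i.e., that the chromatic number drops by at most a factor of $r$ in passing from $H_j$ to $H_{j+1}$. This is where triangle-freeness is used essentially: the $s$ distinctly-colored later neighbors guaranteed by Case~(ii) of Lemma~\ref{lem k_{1,s}} form an independent set in $G$, which allows us to forbid future chords with $v_{j+1}$ while losing only a controlled amount of chromatic number from the later layers of the grading. Up to this step, the argument is a direct application of the improved star-finding lemma (Lemma~\ref{lem k_{1,s}}) in place of the Galvin--Rival--Sands theorem used by Scott and Seymour, and the better value of $r$ here translates directly to the improved value of $c$ in the conclusion.
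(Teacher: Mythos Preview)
The paper does not give its own proof of this lemma: it is taken over from Scott and Seymour, with the improved value of $r$ from Lemma~\ref{lem k_{1,s}} substituted in, and the text explicitly says the deduction is Statement~2.3 of \cite{DBLP:journals/combinatorics/ScottS17}. At the level of overall strategy---iterate Lemma~\ref{lem k_{1,s}} about $s$ times, each application either yielding the path outright (Case~(i)) or a rainbow star (Case~(ii)) that lets one descend to a subgraph with a coarser grading, with the recursion $w_j=w_{j+1}r+1$ tracking the chromatic loss---your plan is indeed the Scott--Seymour scheme.

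The execution, however, has a real gap. You take $v_{j+1}$ to be the star \emph{centre} that Case~(ii) returns inside $H_j$ and then write $P_{j+1}=P_j\,v_{j+1}$, but nothing in your invariant forces that centre to be adjacent to $v_j$: you only require $H_j$ to avoid neighbours of $v_i$ for $i<j$, so $H_j$ typically contains many non-neighbours of $v_j$, and Lemma~\ref{lem k_{1,s}} gives no control over which block of the grading the centre sits in. The same defect appears at the last step, where you say ``any vertex of $H_{s-1}$ can be taken as $v_s$''---such a vertex need not be adjacent to $v_{s-1}$, so $P_s$ need not be a path at all. In the Scott--Seymour argument this is precisely where the \emph{leaves} of the star (an independent subset of $N(v_j)$, by triangle-freeness) and the ``$+1$'' in $w_j=w_{j+1}r+1$ earn their keep: the leaves are installed as an additional first block of the next grading, the rest of $H_{j+1}$ is purged of all other neighbours of $v_j$, and the bookkeeping is arranged so that the next path vertex is selected from among those leaves, which is what manufactures the edge $v_jv_{j+1}$. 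Your sketch uses the independence of the leaves only to ``forbid future chords with $v_{j+1}$'' and never to secure the edge $v_jv_{j+1}$; so the ``main obstacle'' paragraph is not the only obstacle, and as written the iteration does not even produce a walk.
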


\begin{remark}
The statement of Lemma \ref{lem seymour main} can be generalized to graphs of bounded clique number as shown in Statement 2.3 by Scott and Seymour in \cite{DBLP:journals/combinatorics/ScottS17}.
\end{remark}

From Lemmas \ref{lem k_{1,s}} and \ref{lem seymour main}, we have the following theorem. 

\begin{theorem}
    Every colored triangle-free graph $(G, \phi)$ contains an induced rainbow path on $(\log \chi (G))^{\frac{1}{2}- o(1)}$ vertices under $\phi$.
\end{theorem}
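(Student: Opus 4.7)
The plan is to derive the theorem directly from Lemma \ref{lem seymour main} by writing the threshold $c$ explicitly as a function of $s$ and inverting to bound $s$ in terms of $\chi(G)$. Since all combinatorial content has been packaged into Lemmas \ref{lem k_{1,s}} and \ref{lem seymour main}, only an asymptotic computation remains.

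First I would unfold the recurrence $w_s = 0$, $w_j = w_{j+1} r + 1$. This is a linear recurrence whose solution is a geometric sum in $r$:
\[
w_j \;=\; \sum_{i=0}^{s-j-1} r^i \;=\; \frac{r^{s-j}-1}{r-1}.
\]
In particular $w_1 \le 2\,r^{s-2}$ for $r \ge 2$, so $c = (w_1+1)r \le 3\,r^{s-1}$. Substituting $r = 4\cdot 2^{2(s-1)\log(s-1)}$ and taking logarithms gives
\[
\log c \;\le\; \log 3 + (s-1)\bigl(2 + 2(s-1)\log(s-1)\bigr) \;=\; 2(s-1)^2\log(s-1)\,(1+o(1)).
\]

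Finally I would invert this estimate. Given a colored triangle-free graph $(G,\phi)$ with $\chi(G)=k$, choose $s$ to be the largest positive integer with $c(s) < k$. From the bound above, any $s$ satisfying $2 s^2 \log s \le \log k$ is admissible, which yields $s = \Theta\bigl(\sqrt{\log k / \log \log k}\bigr) = (\log k)^{\frac{1}{2} - o(1)}$. Applying Lemma \ref{lem seymour main} to this choice of $s$ then produces the desired induced rainbow path. The only nontrivial point is the asymptotic inversion, which is entirely routine; there is no combinatorial obstacle here, since the main work has already been carried out in Lemma \ref{lem k_{1,s}} (via the improved parameter $r$) and Lemma \ref{lem seymour main}.
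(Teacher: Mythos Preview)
Your proposal is correct and follows exactly the route the paper intends: the paper itself gives no argument beyond ``From Lemmas~\ref{lem k_{1,s}} and~\ref{lem seymour main}, we have the following theorem,'' and you have carried out the routine asymptotic calculation that the authors leave implicit. The unwinding of the recurrence to $w_1 = \sum_{i=0}^{s-2} r^i$, the bound $c \le 3r^{s-1}$, and the inversion $s = \Theta\bigl(\sqrt{\log k/\log\log k}\bigr) = (\log k)^{1/2-o(1)}$ are all accurate.
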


We now relax the requirement from obtaining a rainbow path to obtaining a colorful path. Consider a colored triangle-free graph $(G, \phi)$. What is the largest $k$ for which $G$ contains a $k$-colorful path? If $\phi$ is a coloring that uses only $\chi(G)$ colors, then $k \le \chi(G)$. Thus, the largest $k$ one can hope for is $k = \chi(G)$.

\begin{theorem}
    Let $(G, \phi)$ be a colored triangle-free graph. Under $\phi$, for every $v \in V(G)$, there exists an induced $\frac{\chi (G)}{2}$-colorful path starting at $v$.
\end{theorem}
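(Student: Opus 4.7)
The plan is to argue by induction on $\chi(G)$. The base case $\chi(G) \le 2$ is immediate, since the single-vertex path $\{v\}$ is $1$-colorful and $\lceil \chi(G)/2 \rceil \le 1$.

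For the inductive step, let $k := \chi(G) \ge 3$, fix $v \in V(G)$, and set $c := \phi(v)$. First I would establish the handy fact that $\chi(G - \phi^{-1}(c)) = k - 1$: the upper bound follows by restricting $\phi$ to the remaining $k - 1$ colors, and the lower bound follows by contradiction, since any proper $(k-2)$-coloring of $G - \phi^{-1}(c)$ would extend---by assigning the independent set $\phi^{-1}(c)$ a fresh color---to a proper $(k-1)$-coloring of $G$, contradicting $\chi(G) = k$.

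Next, I would pick a neighbor $u \in N(v)$ and form $G' := G - \phi^{-1}(c) - (N(v) \setminus \{u\})$. By triangle-freeness of $G$, the set $N(v)$ is independent, hence so is its subset $N(v) \setminus \{u\}$; removing an independent set drops the chromatic number by at most one, so $\chi(G') \ge k - 2$. Applying the induction hypothesis to $G'$ at the vertex $u$ yields an induced $\lceil (k-2)/2 \rceil$-colorful path $P$ in $G'$ starting at $u$. Prepending $v$ to $P$ produces an induced path in $G$: the only vertex of $V(P)$ lying in $N(v)$ is $u$, by construction of $G'$. Moreover, every vertex of $P$ has color different from $c$ (because $V(P) \subseteq V(G - \phi^{-1}(c))$), so the prepending strictly adds the color $c$ to the color set, and the resulting path from $v$ sees at least $\lceil (k-2)/2 \rceil + 1 = \lceil k/2 \rceil$ distinct colors.

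The main obstacle I anticipate is ensuring the induction hypothesis is applied at a vertex $u$ whose connected component in $G'$ actually has chromatic number at least $k - 2$; after all, the hypothesis only delivers $\lceil \chi(C_u)/2 \rceil$ colors, where $C_u$ denotes $u$'s component in $G'$, and this might fall short of $\lceil (k-2)/2 \rceil$ in general. I plan to handle this by first locating a connected component $D$ of $G - \phi^{-1}(c) - N(v)$ with $\chi(D) \ge k - 2$ (which exists, since $G - \phi^{-1}(c) - N(v)$ has chromatic number at least $k - 2$ by another independent-set-removal argument on top of the previous step), and then choosing $u$ to be a neighbor of $v$ that is adjacent in $G - \phi^{-1}(c)$ to some vertex of $D$; with such a $u$, the component of $u$ in $G'$ contains $D$, so its chromatic number is at least $k - 2$. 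The delicate case is when every such $D$ is disconnected from $N(v)$ in $G - \phi^{-1}(c)$---there one must first walk from $v$ along a short induced path through other vertices of $\phi^{-1}(c)$ to reach the far high-chromatic region, then apply induction inside it, and carefully account for the colors picked up on the bridge so that the final count still reaches $\lceil k/2 \rceil$.
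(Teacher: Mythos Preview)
Your overall strategy matches the paper's: induct on $\chi(G)$, strip the colour class of $v$, then strip an independent neighbourhood to reach a subgraph of chromatic number at least $k-2$, apply the hypothesis there, and prepend to recover the colour $c$. Your ``easy case'' (some $u\in N(v)$ adjacent to a component $D$ of $G-\phi^{-1}(c)-N(v)$ with $\chi(D)\ge k-2$) is correct and is precisely the paper's argument specialised to the situation where the bridge has length one.

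The gap is your ``delicate case,'' which you only sketch. Two specific corrections: (1) the bridge cannot literally run ``through other vertices of $\phi^{-1}(c)$''---consecutive vertices are differently coloured---it is just an ordinary shortest path in $G$; (2) there is no accounting of bridge colours to do, because the bridge need contribute only the single colour $c$ via $v$, and $\tfrac{k-2}{2}+1=\tfrac{k}{2}$ regardless of how long the bridge is. What the paper actually does, uniformly in both your cases, is this: take a component $C_1$ of $G-\phi^{-1}(c)$ with $\chi(C_1)=k-1$ and a \emph{shortest} path $P$ in $G$ from $v$ to $C_1$; let $w$ be its penultimate vertex. Now delete $N(w)\cap V(C_1)$ from $C_1$ (the neighbours of $w$, not of $v$): this set is independent by triangle-freeness, so some component $C_2$ has $\chi(C_2)\ge k-2$, and connectivity of $C_1$ guarantees a $w_i\in N(w)\cap V(C_1)$ adjacent to $C_2$. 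Apply the hypothesis at $w_i$ inside $C_1[V(C_2)\cup\{w_i\}]$ and concatenate with $P_w$. Induced-ness follows because $P$ is shortest (so no vertex of $P_w$ other than $w$ sees $C_1$) and the only neighbour of $w$ on the inductive path is $w_i$. The move that your sketch is missing is to remove the neighbourhood of the \emph{endpoint of the bridge} rather than of $v$.
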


\begin{proof}
    We shall prove this by induction on $\chi(G)$. It is easy to see that the statement is true for the base case when $\chi(G) = 1$.
    Assume that the statement is true for $\chi(G) < k$, where $k \ge 2$.
    Let $G$ be a minimal graph with $\chi (G) = k$ for which the statement is false. 
    Let $v$ be a vertex in $G$ with $\phi(v) = c$.
    For the induction step, we first remove all the vertices that are colored with color `c' from $G$, including the vertex $v$. LLet the resulting graph be $G'$; we note that $\chi(G')=k-1$.
    Thus, there exists at least one component in $G'$ whose chromatic number is $k-1$. 
    
    Now consider any one such component $C_1$ of $G'$. Let $P$ be a shortest path from $v$ to $C_1$ in $G$ with $|V(P) \cap V(C_1)| = 1$. 
    The first vertex of $P$ is $v$, the second vertex of $P$ is the neighbor of $v$ in $P$, and so on. Let $w$ be the penultimate (or the second last) vertex of $P$.
    Note that $w$ is not in $C_1$ and $w =v$ if $v$ has a neighbor in $V(C_1)$ in $G$.
    Let $N_G(w) \cap V(C_1) = \{w_1, w_2, \dots, w_{\ell}\}$.  Since $G$ is triangle-free, $\{w_1, w_2, \dots, w_{\ell}\}$ forms an independent set. 
    Further, since $P$ is a shortest path from $w$ to $C_1$, the only neighbor of $w_i$, $1 \le i \le \ell$, in $P$ is the vertex $w$. 
    Let $G''$ be the subgraph of $C_1$ obtained by deleting $w_1, w_2, \dots, w_{\ell}$ from $C_1$. 
    Since the chromatic number of $C_1$ is $k-1$, there exists a component in $G''$, say $C_2$, whose chromatic number is at least $k-2$.

    Let  $w_i \in \{w_1, w_2, \dots, w_{\ell}\}$ be a vertex that had a neighbor in $V(C_2)$ in the component $C_1$. Let $G'''$ be the connected subgraph of $C_1$ induced by $V(C_2) \cup \{w_i\}$. We have $\chi(G''') \ge k-2$ and by induction hypothesis, there is an induced $\frac{k-2}{2}$-colorful path $Q$ in $G'''$ starting at $w_i$, under the coloring $\phi$.

    It follows from our construction that the path $Q$ does not have any vertex with color `c'.
    Now consider the path $R= P_w$ - $Q$. This path is well defined as $P_w$ is a path in $G$ between $v$ and $w$, $Q$ is a path in $G$ starting at vertex $w_i$, $ww_i \in E(G)$, and $V(P_w) \cap V(Q) = \emptyset$.
    Since the path $R$ has vertex $v$ in it which sees color `c', the path $R$ sees $1 + \frac{k-2}{2}  = \frac{k}{2}$ colors.
    
    What remains to be shown is that $R$ is an induced path. By definition, both paths $P_w$ and $Q$ are induced. Therefore, it suffices to show that there is no edge between a vertex of $P_w$ and a vertex of $Q$, except the edge $ww_i$. 
    Since $P$ is a shortest path from $v$ to $C_1$, no vertex of $P_w$, other than the vertex $w$, has a neighbor in $Q$; otherwise, such an edge would yield a shorter path from $v$ to $C_1$, contradicting the choice of $P$. From our construction of $Q$, the only neighbor of $w$ in $Q$ is the vertex $w_i$. Therefore the concatenation of $P_w$ and $Q$ forms an induced path $R$.

\end{proof}

\bibliographystyle{plain}
\bibliography{Thesis_reference}

@article{gallai1968directed,
  title={On directed paths and circuits},
  author={Gallai, Tibor},
  journal={Theory of graphs},
  volume={38},
  pages={2054},
  year={1968},
  publisher={Academic Press New York}
}

@article{roy1967nombre,
  title={Nombre chromatique et plus longs chemins d'un graphe},
  author={Roy, Bernard},
  journal={Revue fran{\c{c}}aise d'informatique et de recherche op{\'e}rationnelle},
  volume={1},
  number={5},
  pages={129--132},
  year={1967},
  publisher={EDP Sciences}
}

@article{basavaraju2025variants,
  title={Variants of the Gy{\'a}rf{\'a}s--Sumner conjecture: Oriented trees and rainbow paths},
  author={Basavaraju, Manu and Chandran, L Sunil and Francis, Mathew C and Murali, Karthik},
  journal={Journal of Graph Theory},
  volume={108},
  number={1},
  pages={136--161},
  year={2025},
  publisher={Wiley Online Library}
}

@book{lovasz2007combinatorial,
  title={Combinatorial problems and exercises},
  author={Lov{\'a}sz, L{\'a}szl{\'o}},
  volume={361},
  year={2007},
  publisher={American Mathematical Soc.}
}

@article{DBLP:journals/combinatorics/ScottS17,
  author       = {Alex Scott and
                  Paul D. Seymour},
  title        = {Induced Subgraphs of Graphs with Large Chromatic Number {IX:} Rainbow
                  Paths},
  journal      = {The Electronic Journal of Combinatorics},
  volume       = {24},
  number       = {2},
  pages        = {P2.53},
  year         = {2017},
  url          = {https://doi.org/10.37236/6768},
  doi          = {10.37236/6768},
  timestamp    = {Mon, 01 Dec 2025 18:41:25 +0100},
  biburl       = {https://dblp.org/rec/journals/combinatorics/ScottS17.bib},
  bibsource    = {dblp computer science bibliography, https://dblp.org}
}

@article{galvin1982ramsey,
  title={A Ramsey-type theorem for traceable graphs},
  author={Galvin, Fred and Rival, Ivan and Sands, Bill},
  journal={Journal of Combinatorial Theory, Series B},
  volume={33},
  number={1},
  pages={7--16},
  year={1982},
  publisher={Elsevier}
}

@article{gyarfas1987problems,
  title={Problems from the world surrounding perfect graphs},
  author={Gy{\'a}rf{\'a}s, Andr{\'a}s},
  journal={Applicationes Mathematicae},
  volume={19},
  number={3-4},
  pages={413--441},
  year={1987},
  publisher={Polska Akademia Nauk. Instytut Matematyczny PAN}
}

@article{babu2019induced,
  title={On induced colourful paths in triangle-free graphs},
  author={Babu, Jasine and Basavaraju, Manu and Chandran, L Sunil and Francis, Mathew C},
  journal={Discrete Applied Mathematics},
  volume={255},
  pages={109--116},
  year={2019},
  publisher={Elsevier}
}

@article{gyarfas2016induced,
  title={Induced colorful trees and paths in large chromatic graphs},
  author={Gyarfas, Andras and S{\'a}rk{\"o}zy, G{\'a}bor},
  journal={Electronic Journal of Combinatorics},
  volume={23},
  number={4},
  pages={P4},
  year={2016}
}
\end{document}